\documentclass[11pt,letterpaper,reqno]{amsart}

\usepackage{amsmath,amssymb,amsthm,amsfonts}
\usepackage[T1]{fontenc}
\usepackage{doi}
\usepackage{hyperref}
\usepackage{color}
\hypersetup{pdfstartview={FitH},colorlinks=true,citecolor=blue,linkcolor=blue,urlcolor=blue}

\addtolength{\hoffset}{-1.2cm}\addtolength{\textwidth}{2.4cm}
\addtolength{\voffset}{-0.9cm}\addtolength{\textheight}{1.8cm}

\newtheorem{thm}{Theorem}[section]
\newtheorem{lem}[thm]{Lemma}
\newtheorem{prop}[thm]{Proposition}
\newtheorem{cor}[thm]{Corollary}
\newtheorem{prob}[thm]{Problem}
\newtheorem{conj}[thm]{Conjecture}
\theoremstyle{definition}
\newtheorem{remark}[thm]{Remark}

\numberwithin{equation}{section}

\newcommand{\Tr}{\operatorname{Tr}}
\newcommand{\rank}{\operatorname{rank}}
\newcommand{\Ree}{\operatorname{Re}}
\newcommand{\diag}{\operatorname{diag}}

\newcommand{\bh}{\mathbb{H}_n}
\newcommand{\bm}{\mathbb{M}_n}

\begin{document}
	
	\title[Weak majorization at orders three and four]{Weak majorization inequalities for the cubic and quartic coefficients of $e^{(A+B)t}$ versus $e^{At}e^{Bt}$}
	
	\author[T.~Zhang]{Teng Zhang}
	\address{School of Mathematics and Statistics, Xi'an Jiaotong University, Xi'an 710049, P.~R.~China}
	\email{teng.zhang@stu.xjtu.edu.cn}
	
	\subjclass[2020]{15A42, 15A60}
	\keywords{Hermitian matrix; singular values; weak majorization; Ky Fan principle; commutator}
	
	\begin{abstract}
		Let $A,B\in\bh$ and set $H=A+B$.
		For each integer $k\ge 1$ define
		\[
		Q_k:=\sum_{p=0}^k \binom{k}{p} A^pB^{k-p},
		\qquad
		R_k:=\Ree\,Q_k=\frac{Q_k+Q_k^*}{2}.
		\]
		Then $H^k=\left.\frac{d^k}{dt^k}e^{Ht}\right|_{t=0}$ and
		$Q_k=\left.\frac{d^k}{dt^k}(e^{At}e^{Bt})\right|_{t=0}$.
		We prove that, for $k=3,4$,
		\[
		\lambda(H^k)\prec_w \sigma(Q_k).
		\]
		Equivalently, the eigenvalues of the cubic and quartic Taylor coefficients of $e^{(A+B)t}$
		are weakly majorized by the singular values of the corresponding coefficients of the
		Golden--Thompson product $e^{At}e^{Bt}$.
		Our argument combines Ky Fan variational principles with explicit commutator identities
		for $R_k-H^k$ at orders $k=3,4$, reducing the problem to the positivity of certain double-commutator trace forms
		tested against Ky Fan maximizing projections.
		We also record a general sufficient condition for higher orders based on commutator
		decompositions.
	\end{abstract}
	
	\maketitle
	
	\section{Introduction}
	
	Let $\bm$ be the space of $n\times n$ complex matrices and let $\bh\subset\bm$ be the Hermitian matrices. We use $\preceq$ to denote the usual Loewner order on $\bh$.
	For $X\in\bh$ we write
	\[
	\lambda(X)=\bigl(\lambda_1(X),\dots,\lambda_n(X)\bigr)
	\]
	for the eigenvalues listed in nonincreasing order.
	For $Y\in\bm$ we write
	\[
	\sigma(Y)=\bigl(\sigma_1(Y),\dots,\sigma_n(Y)\bigr)
	\]
	for the singular values listed in nonincreasing order.
	
	\medskip
	
	Given $x,y\in\mathbb{R}^n$ (both arranged in nonincreasing order), we say that $x$ is \emph{weakly majorized} by $y$, written $x\prec_w y$, if
	\[
	\sum_{i=1}^r x_i \le \sum_{i=1}^r y_i,\qquad r=1,\dots,n.
	\]
	If equality holds also for $r=n$, we say that $x$ is \emph{majorized} by $y$, written $x\prec y$.
	
	If $x,y\in \mathbb{R}^n_+$, we say that $x$ is \emph{weakly log-majorized} by $y$, written $x\prec_{w\log} y$, if
	\[
	\prod_{i=1}^r x_i \le \prod_{i=1}^r y_i,\qquad r=1,\dots,n.
	\]
	If equality holds also for $r=n$, we say that $x$ is \emph{log-majorized} by $y$, written $x\prec_{\log} y$.
	For background on matrix majorization, see \cite{MOA11}.
	
	\medskip
	
	The Golden--Thompson inequality asserts that for all $A,B\in\bh$,
	\begin{equation}\label{eq:GT}
		\Tr e^{A+B} \le \Tr\!\left(e^{A}e^{B}\right),
	\end{equation}
	with equality if and only if $A$ and $B$ commute \cite{Len71,So92}.
	Thompson proved the stronger log-majorization refinement \cite{Tom71}
	\begin{equation}\label{eq:Thompson}
		\lambda(e^{A+B})\prec_{\log}\lambda(e^{A}e^{B}).
	\end{equation}
	Note that $e^{A}e^{B}$ is similar to the positive definite matrix $e^{B/2}e^{A}e^{B/2}$, hence
	its eigenvalues are positive and can be listed in nonincreasing order.
	See \cite{AH94,BPL04,Ber88,Bha97,CFKK82,FT14,Hia97,Hia16,HP93,Pet94} for further developments.
	
	\medskip
	
	Both \eqref{eq:GT} and \eqref{eq:Thompson} compare the \emph{full} exponentials.
	A natural bridge to more local information is provided by the Lie--Trotter product formula
	\cite{Kat78,Tro59}:
	\begin{equation}\label{eq:LT}
		e^{t(A+B)}=\lim_{n\to\infty}\Bigl(e^{\frac{t}{n}A}e^{\frac{t}{n}B}\Bigr)^n .
	\end{equation}
	In particular, the product $e^{tA}e^{tB}$ is the basic first-order splitting step
	approximating $e^{t(A+B)}$, which suggests asking whether one can compare, in a spectral sense,
	the Taylor coefficients of $e^{(A+B)t}$ and $e^{At}e^{Bt}$ at $t=0$.
	
	The Baker--Campbell--Hausdorff (BCH) formula
	\cite{Bak05,Cam97,Dyn47,Hal15,Hau06} makes precise how the
	noncommutativity of $A$ and $B$ enters: for small $t$,
	\[
	\log\!\bigl(e^{tA}e^{tB}\bigr)
	= t(A+B) + \frac{t^2}{2}[A,B]
	+ \frac{t^3}{12}\bigl([A,[A,B]]+[B,[B,A]]\bigr)+\cdots .
	\]
	Equivalently, expanding both sides at $t=0$ gives
	\[
	e^{(A+B)t}=I+t(A+B)+\frac{t^2}{2}(A^2+AB+BA+B^2)+\cdots,
	\]
	while
	\[
	e^{At}e^{Bt}=I+t(A+B)+\frac{t^2}{2}(A^2+2AB+B^2)+\cdots,
	\]
	so the first discrepancy already appears in the second derivative and is governed by the commutator:
	\[
	\frac{d^2}{dt^2}\Big|_{t=0}\!\left(e^{At}e^{Bt}-e^{(A+B)t}\right)= [A,B].
	\]
	Higher derivatives capture higher-order nested commutators predicted by BCH, and therefore provide
	a refined, coefficientwise view of how far the product $e^{At}e^{Bt}$ deviates from $e^{(A+B)t}$.
	This motivates studying spectral inequalities for the derivatives (or Taylor coefficients) at $t=0$,
	as a local counterpart to \eqref{eq:GT} and \eqref{eq:Thompson}.
	
	\medskip
	
	The results of this paper show that, at orders $k=3,4$, the deviation
	between the two expansions can be organized into explicit commutator terms that are
	nonnegative when tested against Ky Fan spectral projections, yielding the weak majorization
	inequalities $\lambda(H^k)\prec_w\sigma(Q_k)$.
	
	\medskip
	
	Fix $A,B\in\bh$ and set $H=A+B$.
	Expanding at $t=0$ gives
	\[
	e^{Ht}
	=
	I+Ht+\frac{H^2}{2}t^2+\frac{H^3}{3!}t^3+\frac{H^4}{4!}t^4+\cdots,
	\]
	and
	\[
	e^{At}e^{Bt}
	=
	I+Q_1t+\frac{Q_2}{2}t^2+\frac{Q_3}{3!}t^3+\frac{Q_4}{4!}t^4+\cdots,
	\qquad
	Q_k:=\sum_{p=0}^k\binom{k}{p}A^pB^{k-p}.
	\]
	(The formula for $Q_k$ follows from the Leibniz rule and $\frac{d^p}{dt^p}e^{At}\big|_{t=0}=A^p$.) Clearly, $H=Q_1$.
	Since $Q_2=A^2+2AB+B^2$, we have $H^2=\Ree\,Q_2$, hence by the Fan--Hoffman inequality (Lemma~\ref{lem:fan-hoffman}),
	\[
	\lambda(H^2)=\lambda(\Ree\,Q_2)\prec_w \sigma(Q_2).
	\]
	
	Motivated by this, we ask whether analogous comparisons hold at higher orders.
	
	\begin{prob}\label{prob:k}
		Let $A,B\in\bh$ and $k\ge 3$. Is it true that
		\[
		\lambda(H^k)\prec_w \sigma(Q_k)\,?
		\]
		Equivalently,
		\[
		\lambda\!\left(\frac{d^k}{dt^k}e^{(A+B)t}\Big|_{t=0}\right)
		\prec_w
		\sigma\!\left(\frac{d^k}{dt^k}(e^{At}e^{Bt})\Big|_{t=0}\right)?
		\]
	\end{prob}
	
	In this paper we prove the conjectured inequality for $k=3$ and $k=4$, and we give a
	general reduction and a sufficient condition suggesting an approach to general $k$.
	
	\section{Preliminaries}
	
	We use the following Ky Fan variational principle together with the Fan--Hoffman inequality.
	
	\begin{lem}[Ky Fan \cite{Fan49}]\label{lem:fan_eig}
		Let $X\in\bh$. Then for each $r=1,\dots,n$,
		\[
		\sum_{j=1}^r \lambda_j(X)
		=
		\max_{\substack{E=E^*=E^2\\ \rank(E)=r}} \Tr(EX).
		\]
	\end{lem}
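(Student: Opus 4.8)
The plan is to prove the two inequalities $\le$ and $\ge$ separately, working throughout in a fixed orthonormal eigenbasis of $X$. Write $X=\sum_{i=1}^n\lambda_i(X)\,u_iu_i^*$ with $\langle u_i,u_j\rangle=\delta_{ij}$ and $\lambda_1(X)\ge\cdots\ge\lambda_n(X)$. For the bound $\Tr(EX)\le\sum_{j=1}^r\lambda_j(X)$, let $E=E^*=E^2$ have rank $r$ and expand the trace in this basis:
\[
\Tr(EX)=\sum_{i=1}^n\lambda_i(X)\,\langle u_i,Eu_i\rangle=\sum_{i=1}^n\lambda_i(X)\,d_i,\qquad d_i:=\langle u_i,Eu_i\rangle.
\]
From $0\preceq E\preceq I$ we get $0\le d_i\le 1$, and from $\Tr E=\rank E$ we get $\sum_{i=1}^n d_i=r$. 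Hence $(d_1,\dots,d_n)$ lies in the polytope $\{d\in[0,1]^n:\sum_i d_i=r\}$, and since the weights $\lambda_i(X)$ are nonincreasing, the linear functional $d\mapsto\sum_i\lambda_i(X)d_i$ attains its maximum over this polytope at the extreme point $(1,\dots,1,0,\dots,0)$, so $\Tr(EX)\le\sum_{j=1}^r\lambda_j(X)$. (Equivalently, Abel summation applied to the partial sums $s_k=\sum_{i\le k}d_i\le\min(k,r)$, together with $\lambda_k(X)\ge\lambda_{k+1}(X)$, gives the bound directly.)

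For the reverse inequality it suffices to exhibit a single projection attaining the bound: take $E_\star$ to be the orthogonal projection onto $\operatorname{span}\{u_1,\dots,u_r\}$. Then $E_\star=E_\star^*=E_\star^2$, $\rank(E_\star)=r$, and $\Tr(E_\star X)=\sum_{j=1}^r\langle u_j,Xu_j\rangle=\sum_{j=1}^r\lambda_j(X)$. Combining with the previous paragraph yields the asserted equality.

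This is the classical Ky Fan maximum principle, and I do not expect any genuine obstacle. The only points requiring (routine) care are the two normalizations placing $(d_i)$ in the stated polytope — the bounds $0\le d_i\le 1$ from $0\preceq E\preceq I$ and the constraint $\sum_i d_i=r$ from $\Tr E=\rank E$ — and the extreme-point/rearrangement step that identifies the maximizing configuration of the $d_i$; neither is deep.
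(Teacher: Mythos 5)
Your proof is correct and is the standard argument for Ky Fan's maximum principle. The paper does not prove this lemma; it cites it directly to Fan's 1949 paper, so there is no in-paper proof to compare against. Your two-sided argument---bounding $\Tr(EX)=\sum_i\lambda_i(X)\,d_i$ via the constraints $0\le d_i\le1$ (from $0\preceq E\preceq I$) and $\sum_i d_i=\Tr E=r$, maximizing the linear functional at the extreme point $(1,\dots,1,0,\dots,0)$, then attaining the bound with the spectral projection onto the top $r$ eigenvectors---is exactly the classical proof and is complete; both the Abel-summation variant and the extreme-point argument you sketch are valid ways to close the upper bound.
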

	
	For $Y\in\bm$, let $\Ree\,Y=(Y+Y^*)/2$.
	
	\begin{lem}[Fan--Hoffman \cite{FH55}]\label{lem:fan-hoffman}
		Let $Y\in\bm$. Then for each $j=1,\dots,n$,
		\[
		\lambda_j(\Ree\,Y)\le \sigma_j(Y).
		\]
		Consequently, $\lambda(\Ree\,Y)\prec_w \sigma(Y)$.
	\end{lem}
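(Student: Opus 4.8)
The plan is to realize $\Ree Y$ as an isometric compression of the Hermitian dilation of $Y$ and then to invoke monotonicity of eigenvalues under compression; the weak majorization assertion is then obtained by summing. Concretely, I would first form the $2n\times 2n$ Hermitian matrix $\widetilde Y=\begin{pmatrix}0&Y\\ Y^*&0\end{pmatrix}$, whose spectrum is $\{\pm\sigma_1(Y),\dots,\pm\sigma_n(Y)\}$; since every $\sigma_i(Y)\ge 0$, the $n$ largest eigenvalues of $\widetilde Y$ are exactly $\sigma_1(Y)\ge\cdots\ge\sigma_n(Y)$, so $\lambda_j(\widetilde Y)=\sigma_j(Y)$ for $j=1,\dots,n$. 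Next I would set $P=\tfrac1{\sqrt2}\begin{pmatrix}I_n\\ I_n\end{pmatrix}$ and note, by a one-line block computation, that $P^*P=I_n$ while $P^*\widetilde Y P=\tfrac12(Y+Y^*)=\Ree Y$.

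The main step is then the compression inequality $\lambda_j(P^*\widetilde Y P)\le\lambda_j(\widetilde Y)$ for $j=1,\dots,n$. I would deduce this from the Courant--Fischer min--max principle $\lambda_j(Z)=\max_{\dim T=j}\min_{0\ne y\in T}\dfrac{y^*Zy}{y^*y}$: since $P$ is an isometry, $\|Px\|=\|x\|$, so for any $j$-dimensional subspace $T\subseteq\mathbb C^n$ one has $\min_{0\ne x\in T}\dfrac{x^*(P^*\widetilde Y P)x}{x^*x}=\min_{0\ne y\in P(T)}\dfrac{y^*\widetilde Y y}{y^*y}$, and as $T$ varies the image $P(T)$ runs through a subfamily of the $j$-dimensional subspaces of $\mathbb C^{2n}$; taking the maximum over $T$ therefore cannot exceed $\lambda_j(\widetilde Y)$. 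Chaining the two identities with this inequality gives $\lambda_j(\Ree Y)=\lambda_j(P^*\widetilde Y P)\le\lambda_j(\widetilde Y)=\sigma_j(Y)$, which is the first assertion.

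Finally, summing the pointwise bounds over $j=1,\dots,r$ gives $\sum_{j=1}^r\lambda_j(\Ree Y)\le\sum_{j=1}^r\sigma_j(Y)$ for every $r$, which is exactly $\lambda(\Ree Y)\prec_w\sigma(Y)$. I do not expect a genuine obstacle here; the only point needing a little care is the spectral bookkeeping for $\widetilde Y$, namely that its top $n$ eigenvalues coincide with the singular values of $Y$ (which relies on $\sigma_i(Y)\ge 0$). A dilation-free alternative is to start from the Cauchy--Schwarz estimate $x^*(\Ree Y)x=\Ree(x^*Yx)\le\|x\|\,\|Yx\|$ and feed it into the min--max formulas for $\lambda_j(\Ree Y)$ and for $\sigma_j(Y)=\max_{\dim T=j}\min_{0\ne x\in T}\|Yx\|/\|x\|$ directly.
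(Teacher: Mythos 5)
Your proof is correct, and it goes further than the paper's: the paper simply cites the pointwise bound $\lambda_j(\Ree Y)\le\sigma_j(Y)$ as classical (pointing to Bhatia, Thm.~III.5.1) and then sums over $j=1,\dots,r$, whereas you supply a complete self-contained derivation. Your route — realize $\Ree Y$ as the compression $P^*\widetilde Y P$ of the Hermitian dilation $\widetilde Y=\bigl(\begin{smallmatrix}0&Y\\ Y^*&0\end{smallmatrix}\bigr)$ by the isometry $P=\tfrac1{\sqrt2}\bigl(\begin{smallmatrix}I\\ I\end{smallmatrix}\bigr)$, identify $\lambda_j(\widetilde Y)=\sigma_j(Y)$ for $j\le n$, and invoke the Courant--Fischer/Cauchy-interlacing bound $\lambda_j(P^*\widetilde Y P)\le\lambda_j(\widetilde Y)$ — is one of the standard proofs of the Fan--Hoffman inequality and is essentially what the cited reference does internally. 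The bookkeeping you flag (that the top $n$ eigenvalues of $\widetilde Y$ are exactly the $\sigma_j(Y)$ because singular values are nonnegative) is handled correctly, and the final summation to obtain $\prec_w$ matches the paper verbatim. Your dilation-free alternative via $\Ree(x^*Yx)\le\|x\|\,\|Yx\|$ together with the min--max characterizations of $\lambda_j(\Ree Y)$ and $\sigma_j(Y)$ is also sound and arguably even more elementary. In short: same statement, same summation step, but you replace the paper's literature citation with an actual proof.
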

	
	\begin{proof}The pointwise inequality $\lambda_j(\Ree\,Y)\le \sigma_j(Y)$ is classical and can also be found,
		for instance, in \cite[p.~73, Thm.~III.5.1]{Bha97}.
		Summing the inequalities for $j=1,\dots,r$ yields $\sum_{j=1}^r\lambda_j(\Ree\,Y)\le \sum_{j=1}^r\sigma_j(Y)$ for every $r$, i.e.\ $\lambda(\Ree\,Y)\prec_w\sigma(Y)$.
	\end{proof}
	
	We shall repeatedly use the following trace identity for commutators: for any $X,Y,Z\in\bm$,
	\begin{equation}\label{eq:trace-comm}
		\Tr\bigl(Z[X,Y]\bigr)=\Tr\bigl([Z,X]\,Y\bigr).
	\end{equation}
	It follows by expanding both sides and using cyclicity of the trace.
	
	By a “spectral projection associated with the $r$ largest eigenvalues” we mean
	an orthogonal projection onto an $r$-dimensional subspace spanned by eigenvectors
	corresponding to the $r$ largest eigenvalues (counting multiplicities).
	\begin{lem}\label{lem:double-comm-positive}
		Let $F\in\bh$ and let $X\in\bh$.
		Fix $r\in\{1,\dots,n\}$ and let $E$ be a spectral projection of $F$ associated with its $r$ largest eigenvalues.
		Then
		\[
		\Tr\bigl(E[X,[X,F]]\bigr)\ge 0.
		\]
		More precisely, if $F=\diag(f_1,\dots,f_n)$ with $f_1\ge\cdots\ge f_n$ and
		$E=\diag(\underbrace{1,\dots,1}_{r},\underbrace{0,\dots,0}_{n-r})$, and if $X=(x_{ij})$ in this basis, then
		\begin{equation}\label{eq:explicit-double-comm}
			\Tr\bigl(E[X,[X,F]]\bigr)
			=
			2\sum_{1\le i\le r<j\le n}(f_i-f_j)\,|x_{ij}|^2.
		\end{equation}
	\end{lem}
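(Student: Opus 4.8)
The plan is to reduce the statement to the explicit identity \eqref{eq:explicit-double-comm} and read off nonnegativity from the ordering of the $f_i$. Since the scalar $\Tr\bigl(E[X,[X,F]]\bigr)$ is invariant under simultaneous unitary conjugation $(F,X,E)\mapsto(UFU^{*},UXU^{*},UEU^{*})$, I would first choose $U$ so that $UFU^{*}=\diag(f_1,\dots,f_n)$ with $f_1\ge\cdots\ge f_n$ and $UEU^{*}=\diag(1,\dots,1,0,\dots,0)$ with $r$ ones. This is possible precisely because $E$ is, by hypothesis, a spectral projection of $F$ associated with its $r$ largest eigenvalues (not an arbitrary rank-$r$ projection); that hypothesis is used only here. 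It then suffices to prove \eqref{eq:explicit-double-comm} in this basis, after which $\Tr\bigl(E[X,[X,F]]\bigr)\ge0$ follows because $f_i-f_j\ge0$ whenever $i\le r<j$, using $f_i\ge f_r\ge f_{r+1}\ge f_j$.

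To obtain \eqref{eq:explicit-double-comm} I would expand $[X,[X,F]]=X^{2}F-2XFX+FX^{2}$ and compute its diagonal entries. Writing $X=(x_{ij})$ with $x_{ji}=\overline{x_{ij}}$ and using that $F$ is diagonal, one gets $(X^{2}F)_{ii}=(FX^{2})_{ii}=f_i\sum_k|x_{ik}|^{2}$ and $(XFX)_{ii}=\sum_k f_k|x_{ik}|^{2}$, hence the $(i,i)$ entry of $[X,[X,F]]$ equals $2\sum_k (f_i-f_k)|x_{ik}|^{2}$. Summing over $i=1,\dots,r$ yields $\Tr\bigl(E[X,[X,F]]\bigr)=2\sum_{i=1}^{r}\sum_{k=1}^{n}(f_i-f_k)|x_{ik}|^{2}$. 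The part of this double sum with $k\le r$, namely $\sum_{1\le i,k\le r}(f_i-f_k)|x_{ik}|^{2}$, is antisymmetric under the swap $i\leftrightarrow k$ (since $|x_{ik}|=|x_{ki}|$) and therefore vanishes, leaving exactly $2\sum_{1\le i\le r<j\le n}(f_i-f_j)|x_{ij}|^{2}$, which is \eqref{eq:explicit-double-comm}.

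I do not expect a genuine obstacle: the computation is routine, and the only points needing a little care are the diagonalizing reduction (one must note that any orthonormal eigenbasis of $F$ serves, once $E$ is put in standard block form) and the bookkeeping of complex conjugates when expanding the double commutator. If a coordinate-free presentation is preferred, the same conclusion can be reached via the $2\times2$ block decomposition of $X$ relative to $E$: with off-diagonal blocks satisfying $X_{21}=X_{12}^{*}$ and $F=F_1\oplus F_2$, where $F_1$ (the compression of $F$ to the range of $E$) has least eigenvalue $f_r$ and $F_2$ has largest eigenvalue $f_{r+1}$, one finds $\Tr\bigl(E[X,[X,F]]\bigr)=2\Tr\bigl(F_1X_{12}X_{12}^{*}\bigr)-2\Tr\bigl(F_2X_{12}^{*}X_{12}\bigr)$, and then bounds the first trace below by $f_r\Tr\bigl(X_{12}X_{12}^{*}\bigr)$ and the second above by $f_{r+1}\Tr\bigl(X_{12}^{*}X_{12}\bigr)=f_{r+1}\Tr\bigl(X_{12}X_{12}^{*}\bigr)$, so that $\Tr\bigl(E[X,[X,F]]\bigr)\ge 2(f_r-f_{r+1})\Tr\bigl(X_{12}X_{12}^{*}\bigr)\ge0$.
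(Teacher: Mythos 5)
Your proof is correct, and it reaches the same explicit formula \eqref{eq:explicit-double-comm} as the paper, but by a slightly different computational route. The paper first uses the trace identity $\Tr\bigl(E[X,[X,F]]\bigr)=\Tr\bigl([E,X]\,[X,F]\bigr)$ (its equation \eqref{eq:trace-comm}), then reads off $\sum_{i,j}(e_i-e_j)(f_i-f_j)|x_{ij}|^2$, where the factor $(e_i-e_j)$ automatically kills all pairs with $i,j$ on the same side of $r$. You instead expand $[X,[X,F]]=X^2F-2XFX+FX^2$ directly, compute its diagonal entries, sum over $i\le r$, and then observe that the contribution with both indices in $\{1,\dots,r\}$ vanishes by antisymmetry under $i\leftrightarrow k$. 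These are two ways of seeing the same cancellation: the paper's commutator shuffle builds it in structurally, while your direct expansion makes it visible as a symmetry of the double sum; the bookkeeping is comparable. Your coordinate-free block alternative, estimating $\Tr(F_1X_{12}X_{12}^*)\ge f_r\Tr(X_{12}X_{12}^*)$ and $\Tr(F_2X_{12}^*X_{12})\le f_{r+1}\Tr(X_{12}X_{12}^*)$, is a genuinely different and somewhat cleaner argument: it sidesteps index gymnastics entirely, only uses the spectral gap at position $r$, and actually yields the slightly sharper lower bound $2(f_r-f_{r+1})\Tr(X_{12}X_{12}^*)\ge 0$ rather than bare nonnegativity. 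All three arguments crucially use, in the same place, that $E$ is a spectral projection for the \emph{top} $r$ eigenvalues of $F$ and not an arbitrary rank-$r$ projection, and you flag this correctly.
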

	
	\begin{proof}
		By \eqref{eq:trace-comm} with $Z=E$ and $Y=[X,F]$,
		\[
		\Tr\bigl(E[X,[X,F]]\bigr)=\Tr\bigl([E,X]\,[X,F]\bigr).
		\]
		Diagonalize $F$ and choose an orthonormal basis so that
		$F=\diag(f_1,\dots,f_n)$ with $f_1\ge\cdots\ge f_n$ and
		$E=\diag(\underbrace{1,\dots,1}_{r},\underbrace{0,\dots,0}_{n-r})$.
		Write $X=(x_{ij})$ in this basis. Then
		\[
		[X,F]_{ij}=x_{ij}(f_j-f_i),
		\qquad
		[E,X]_{ij}=x_{ij}(e_i-e_j),
		\quad e_i\in\{0,1\}.
		\]
		Using $\Tr(AB)=\sum_{i,j}A_{ij}B_{ji}$ and $X=X^*$ (so $x_{ji}=\overline{x_{ij}}$), we obtain
		\[
		\Tr\bigl([E,X]\,[X,F]\bigr)
		=
		\sum_{i,j}(e_i-e_j)(f_i-f_j)\,|x_{ij}|^2.
		\]
		The only nonzero contributions are those with exactly one of $i,j$ in $\{1,\dots,r\}$.
		Pairing the terms $(i,j)$ and $(j,i)$ yields \eqref{eq:explicit-double-comm}.
		Since $f_i\ge f_j$ for $i\le r<j$, every summand is nonnegative.
	\end{proof}
	
	\section{The cubic coefficient ($k=3$)}
	
	Throughout, set
	\[
	H:=A+B,\qquad X:=A-B.
	\]
	Note that $[X,H]=2[A,B]$.
	
	\begin{lem}\label{lem:D3}
		Let $A,B\in\bh$ and define
		\[
		Q_3:=A^3+3A^2B+3AB^2+B^3,
		\qquad
		R_3:=\Ree\,Q_3.
		\]
		Then
		\begin{equation}\label{eq:D3}
			R_3-H^3=\frac14[X,[X,H]].
		\end{equation}
	\end{lem}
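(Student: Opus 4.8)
The plan is to prove \eqref{eq:D3} by expanding both sides into noncommutative monomials in $A$ and $B$ and matching them. On the right-hand side, $[X,H]=[A-B,A+B]=2[A,B]$, so one further bracket gives
\[
[X,[X,H]]=2\,[A-B,[A,B]]=2\bigl([A,[A,B]]+[B,[B,A]]\bigr).
\]
Since $[A,[A,B]]=A^2B-2ABA+BA^2$ and $[B,[B,A]]=B^2A-2BAB+AB^2$, this means
\[
\tfrac14[X,[X,H]]=\tfrac12\bigl(A^2B+BA^2+AB^2+B^2A\bigr)-ABA-BAB.
\]

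On the left-hand side, taking adjoints reverses products and fixes $A$ and $B$, so $Q_3^*=A^3+3BA^2+3B^2A+B^3$ and hence
\[
R_3=\frac{Q_3+Q_3^*}{2}=A^3+B^3+\tfrac32\bigl(A^2B+BA^2\bigr)+\tfrac32\bigl(AB^2+B^2A\bigr).
\]
Expanding $H^3=(A+B)^3$ into its eight degree-three words gives
\[
H^3=A^3+B^3+A^2B+BA^2+AB^2+B^2A+ABA+BAB,
\]
so that $R_3-H^3=\tfrac12(A^2B+BA^2+AB^2+B^2A)-ABA-BAB$, which matches the expression for $\tfrac14[X,[X,H]]$ found above. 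This proves \eqref{eq:D3}.

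There is no real conceptual obstacle; the computation is elementary noncommutative bookkeeping, and the only points requiring attention are the reversal of factor order when forming $Q_3^*$ and the tracking of the ``balanced'' monomials $ABA$ and $BAB$, which are exactly the terms that survive in $R_3-H^3$ and which must be matched against the double commutator. An alternative, more structural route --- one that suggests what to expect at higher orders --- is to note that $R_3-H^3$ is Hermitian, homogeneous of degree three, invariant under the swap $A\leftrightarrow B$ (because this swap sends $Q_3$ to $Q_3^*$ and fixes $H$), and vanishes whenever $[A,B]=0$; the space of noncommutative polynomials with these properties is one-dimensional and spanned by $[X,[X,H]]=2([A,[A,B]]+[B,[B,A]])$, so $R_3-H^3=c\,[X,[X,H]]$ for some scalar $c$, and comparing the coefficient of the monomial $ABA$ (which is $-1$ on the left and $-4$ on the right) pins down $c=\tfrac14$.
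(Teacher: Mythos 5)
Your proof is correct and follows essentially the same route as the paper: expand $R_3$, $H^3$, and $[X,[X,H]]$ into the eight degree-three words in $A,B$ and match coefficients. The alternative dimension-count argument at the end is a pleasant extra observation but does not change the fact that the core computation coincides with the paper's.
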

	
	\begin{proof}
		A direct expansion gives
		\[
		R_3=A^3+B^3+\frac32\bigl(A^2B+BA^2+AB^2+B^2A\bigr),
		\]
		and
		\[
		H^3=A^3+B^3+\bigl(A^2B+ABA+BA^2\bigr)+\bigl(AB^2+BAB+B^2A\bigr).
		\]
		Subtracting yields
		\[
		R_3-H^3
		=
		\frac12\bigl(A^2B+BA^2+AB^2+B^2A\bigr)-(ABA+BAB).
		\]
		On the other hand,
		\[
		[X,[X,H]]
		=
		[A-B,[A-B,A+B]]
		=
		2[A-B,[A,B]]
		=
		2\bigl(A^2B+BA^2+AB^2+B^2A-2ABA-2BAB\bigr),
		\]
		hence \eqref{eq:D3}.
	\end{proof}
	
	\begin{thm}\label{thm:k3-eig}
		Let $A,B\in\bh$ and let $H=A+B$, $Q_3$ as above. Then
		\[
		\lambda(H^3)\prec \lambda(\Ree\,Q_3).
		\]
		Consequently,
		\[
		\lambda(H^3)\prec_w \sigma(Q_3).
		\]
	\end{thm}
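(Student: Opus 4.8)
The plan is to deduce both claims of Theorem~\ref{thm:k3-eig} from the commutator identity \eqref{eq:D3} of Lemma~\ref{lem:D3} together with the Ky Fan variational principle (Lemma~\ref{lem:fan_eig}). Once the majorization $\lambda(H^3)\prec\lambda(\Ree\,Q_3)$ is established, the weak majorization $\lambda(H^3)\prec_w\sigma(Q_3)$ is immediate: by Lemma~\ref{lem:fan-hoffman} we have $\lambda(\Ree\,Q_3)\prec_w\sigma(Q_3)$, and chaining a majorization into a weak majorization yields a weak majorization. So the whole content is the first relation.

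To prove $\lambda(H^3)\prec\lambda(R_3)$ with $R_3=\Ree\,Q_3$, first I would check the trace equality needed for full (as opposed to weak) majorization: $\Tr(H^3)=\Tr(R_3)$. This follows because $\Tr(R_3-H^3)=\tfrac14\Tr[X,[X,H]]=0$, since the trace of any commutator vanishes. Next, fix $r\in\{1,\dots,n\}$ and let $E$ be a rank-$r$ spectral projection of the Hermitian matrix $H^3$ associated with its $r$ largest eigenvalues, so that by Lemma~\ref{lem:fan_eig},
\[
\sum_{j=1}^r\lambda_j(H^3)=\Tr(E\,H^3).
\]
Then
\[
\sum_{j=1}^r\lambda_j(R_3)\ \ge\ \Tr(E\,R_3)=\Tr(E\,H^3)+\tfrac14\Tr\bigl(E[X,[X,H]]\bigr),
\]
where the inequality is the "easy" direction of Lemma~\ref{lem:fan_eig} (the sum of the $r$ largest eigenvalues dominates $\Tr(EY)$ for any rank-$r$ projection $E$), and the equality uses \eqref{eq:D3}. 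Hence it suffices to show $\Tr\bigl(E[X,[X,H]]\bigr)\ge 0$.

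The main point is therefore the positivity of the double-commutator trace form, and here there is a subtlety to watch: Lemma~\ref{lem:double-comm-positive} gives $\Tr\bigl(E[X,[X,F]]\bigr)\ge 0$ when $E$ is a spectral projection of $F$, but in the computation above $E$ is a spectral projection of $H^3$, whereas the inner matrix in the double commutator is $H$, not $H^3$. The resolution is that $H$ and $H^3$ commute and are simultaneously diagonalizable; moreover the eigenvalues of $H^3$ are ordered exactly as the eigenvalues of $H$ (since $t\mapsto t^3$ is strictly increasing on $\mathbb{R}$), so a spectral projection of $H^3$ onto its $r$ largest eigenvalues is also a spectral projection of $H$ onto its $r$ largest eigenvalues. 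Thus we may diagonalize $H=\diag(h_1,\dots,h_n)$ with $h_1\ge\cdots\ge h_n$ and $E=\diag(1,\dots,1,0,\dots,0)$ in the same basis, apply Lemma~\ref{lem:double-comm-positive} with $F=H$, and read off from \eqref{eq:explicit-double-comm} that
\[
\Tr\bigl(E[X,[X,H]]\bigr)=2\sum_{1\le i\le r<j\le n}(h_i-h_j)\,|x_{ij}|^2\ \ge\ 0,
\]
each summand being nonnegative because $h_i\ge h_j$ for $i\le r<j$. Combining with the trace equality $\Tr(H^3)=\Tr(R_3)$ gives $\lambda(H^3)\prec\lambda(R_3)$, and then Lemma~\ref{lem:fan-hoffman} finishes the proof. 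The only place requiring care is precisely this alignment of the Ky Fan projection of $H^3$ with the eigenbasis of $H$; everything else is a direct application of the two preliminary lemmas.
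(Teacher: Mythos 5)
Your proposal is correct and follows essentially the same route as the paper: establish $\Tr(H^3)=\Tr(R_3)$ via the vanishing trace of the double commutator, use the Ky Fan principle with a projection that is simultaneously a spectral projection of $H^3$ and of $H$ (because $t\mapsto t^3$ is increasing), and then invoke Lemma~\ref{lem:double-comm-positive} with $F=H$ and finish with Lemma~\ref{lem:fan-hoffman}. The only cosmetic difference is that you start from a spectral projection of $H^3$ and note it is one for $H$, while the paper goes the other way, but since $t\mapsto t^3$ is injective the eigenprojections coincide and the two phrasings are equivalent.
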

	
	\begin{proof}First, by Lemma~\ref{lem:D3} and $\Tr([U,V])=0$ for any $U,V\in\bm$, we have
		\begin{equation}\label{eq:tr_H3=R3}
			\Tr(H^3)=\Tr(R_3)=\Tr(\Ree\,Q_3).
		\end{equation}
		
Fix $r\in\{1,\dots,n\}$.
		Let $E$ be a spectral projection of $H$ associated with its $r$ largest eigenvalues.
		Since $t\mapsto t^3$ is increasing on $\mathbb{R}$ and $H^3$ is a polynomial in $H$, the same $E$ is also a spectral projection of $H^3$ associated with its $r$ largest eigenvalues.
		By Lemma~\ref{lem:fan_eig},
		\[
		\sum_{j=1}^r\lambda_j(H^3)=\Tr(EH^3),
		\qquad
		\sum_{j=1}^r\lambda_j(\Ree\,Q_3)\ge \Tr(E\,\Ree\,Q_3).
		\]
		Thus it suffices to show $\Tr(E\,\Ree\,Q_3)\ge \Tr(EH^3)$, i.e.
		\[
		\Tr\bigl(E(\Ree\,Q_3-H^3)\bigr)\ge 0.
		\]
		By Lemma~\ref{lem:D3},
		\[
		\Tr\bigl(E(\Ree\,Q_3-H^3)\bigr)=\frac14\Tr\bigl(E[X,[X,H]]\bigr)\ge 0,
		\]
		where the last inequality is Lemma~\ref{lem:double-comm-positive} with $F=H$. 
		Therefore $\lambda(H^3)\prec_w\lambda(\Ree\,Q_3)$. Together with \eqref{eq:tr_H3=Q3}, we have $\lambda(H^3)\prec \lambda(\Ree\,Q_3)$.

		Finally, Lemma~\ref{lem:fan-hoffman}  gives $\lambda(\Ree\,Q_3)\prec_w\sigma(Q_3)$, and transitivity of $\prec_w$ yields $\lambda(H^3)\prec_w\sigma(Q_3)$.
	\end{proof}
	
	\begin{cor}\label{cor:k3-derivative}
		For $A,B\in\bh$,
		\[
		\lambda\!\left(\frac{d^3}{dt^3}e^{(A+B)t}\Big|_{t=0}\right)
		\prec_w
		\sigma\!\left(\frac{d^3}{dt^3}(e^{At}e^{Bt})\Big|_{t=0}\right).
		\]
	\end{cor}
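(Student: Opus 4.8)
The plan is to obtain the corollary as an immediate consequence of Theorem~\ref{thm:k3-eig}; the only work is to identify the two third derivatives at $t=0$ with the matrices $H^3$ and $Q_3$ that appear there.

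First I would record the left-hand side. Differentiating the power series $e^{Ht}=\sum_{m\ge0}\frac{H^m}{m!}t^m$ three times term by term and evaluating at $t=0$ gives $\left.\frac{d^3}{dt^3}e^{Ht}\right|_{t=0}=H^3$ with $H=A+B$, exactly as stated in the introduction. Next I would treat the right-hand side using the Leibniz rule together with $\frac{d^p}{dt^p}e^{At}=A^p e^{At}$ and $\frac{d^q}{dt^q}e^{Bt}=B^q e^{Bt}$, obtaining
\[
\left.\frac{d^3}{dt^3}\bigl(e^{At}e^{Bt}\bigr)\right|_{t=0}
=\sum_{p=0}^{3}\binom{3}{p}\,A^p B^{3-p}
=A^3+3A^2B+3AB^2+B^3,
\]
which is precisely the matrix $Q_3$ of Lemma~\ref{lem:D3}.

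With these two identities in place, the asserted relation reads verbatim $\lambda(H^3)\prec_w\sigma(Q_3)$, and this is the last line of Theorem~\ref{thm:k3-eig}. There is no real obstacle here: the single point to verify is that the binomial coefficients produced by the Leibniz rule coincide with those in the definition of $Q_3$, which they do. Hence the corollary follows.
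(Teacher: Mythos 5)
Your proposal is correct and is essentially identical to the paper's own proof: both simply identify the third derivatives at $t=0$ with $H^3$ and $Q_3$ (via the power series and the Leibniz rule) and then invoke Theorem~\ref{thm:k3-eig}. The extra detail you supply in verifying the binomial coefficients is fine but not needed beyond what the paper already records in the introduction.
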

	
	\begin{proof}
		This is Theorem~\ref{thm:k3-eig} with $H^3=\frac{d^3}{dt^3}e^{Ht}\big|_{t=0}$ and $Q_3=\frac{d^3}{dt^3}(e^{At}e^{Bt})\big|_{t=0}$.
	\end{proof}
	
	\section{The quartic coefficient ($k=4$)}
	
	\begin{lem}\label{lem:D4}
		Let $A,B\in\bh$ and define
		\[
		Q_4:=A^4+4A^3B+6A^2B^2+4AB^3+B^4,
		\qquad
		R_4:=\Ree\,Q_4.
		\]
		With $H=A+B$ and $X=A-B$ we have the identity
		\begin{equation}\label{eq:D4-}
			R_4-H^4=\frac12[X,[X,H^2]]-\frac14[X,H]^2.
		\end{equation}
	\end{lem}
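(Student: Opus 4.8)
The identity \eqref{eq:D4-} is purely algebraic: both sides are homogeneous of degree $4$ in the noncommutative $*$-algebra generated by the self-adjoint symbols $A,B$ (with $*$ the reversal anti-automorphism fixing $A$ and $B$), so in principle it suffices to expand each side into the $16$ words of length $4$ and match coefficients. I would, however, first simplify the right-hand side so as not to expand the nested commutator by brute force.

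Write $C:=[A,B]$ and $D:=[X,C]=[A-B,[A,B]]$. Since $[X,H]=2[A,B]=2C$ and $[X,\,\cdot\,]$ is a derivation, one gets $[X,H^2]=[X,H]H+H[X,H]=2(CH+HC)$, and applying $[X,\,\cdot\,]$ once more, $[X,[X,H^2]]=2\bigl([X,C]H+C[X,H]+[X,H]C+H[X,C]\bigr)=2(DH+HD)+8C^2$. Together with $[X,H]^2=4C^2$, substituting into \eqref{eq:D4-} collapses the right-hand side to $DH+HD+3C^2$, so that \eqref{eq:D4-} becomes equivalent to the identity $R_4-H^4=DH+HD+3C^2$.

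It then remains to check this last identity at the level of words. Expanding $R_4=\Ree\,Q_4=\tfrac12(Q_4+Q_4^{*})$ and $H^4=(A+B)^4$ and subtracting, the pure powers $A^4,B^4$ cancel and one is left with a fixed combination of the $14$ mixed monomials of degree $4$. On the other side, $C^2=[A,B]^2=ABAB+BABA-AB^2A-BA^2B$, while $D=A^2B+BA^2+AB^2+B^2A-2ABA-2BAB$ is exactly $2(R_3-H^3)$ from Lemma~\ref{lem:D3}; hence $DH+HD=D(A+B)+(A+B)D$ is obtained by left/right multiplication of this known expression by $A+B$, and adding $3C^2$ reproduces the monomial expansion of $R_4-H^4$ term by term.

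The computation involves no genuine difficulty beyond bookkeeping with the degree-$4$ monomials; the only places where one can slip are the numerical coefficients ($3$, $\tfrac12$, $\tfrac14$) and the sign of the $[X,H]^2$ term, so I would double-check those by also expanding $[X,[X,H^2]]$ directly from $H^2=A^2+AB+BA+B^2$ as an independent cross-check. Recognizing that the quartic correction is governed by the same word $D$ that already controlled the cubic case (Lemma~\ref{lem:D3}) is what keeps the argument short, and it is the natural structural point to emphasize in the write-up.
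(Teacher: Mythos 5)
Your proof is correct, and every intermediate step checks out: $[X,H]=2C$, $[X,H^2]=2(CH+HC)$, $[X,[X,H^2]]=2(DH+HD)+8C^2$ by the derivation property, so the right-hand side of \eqref{eq:D4-} collapses to $DH+HD+3C^2$; then with $D=A^2B+BA^2+AB^2+B^2A-2ABA-2BAB$ (which is exactly $2(R_3-H^3)$ from Lemma~\ref{lem:D3}) and $C^2=ABAB+BABA-AB^2A-BA^2B$, the word-by-word expansion of $DH+HD+3C^2$ reproduces precisely the fourteen mixed degree-4 monomials of $R_4-H^4$ with the correct coefficients.

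The paper's own proof (Appendix~\ref{app:D4}) is also a verify-by-expansion argument, but it takes a more brute-force path: it writes $\frac12[X,[X,H^2]]$ as $\frac12(X^2H^2+H^2X^2)-XH^2X$, expands $X^2H^2$, $H^2X^2$, and $XH^2X$ as sixteen-term products of $H^2=A^2+AB+BA+B^2$ and $X^2=A^2-AB-BA+B^2$, then combines with the separately expanded $-\frac14[X,H]^2$. Your route is genuinely tighter in two respects. First, applying the derivation property of $[X,\cdot]$ twice avoids ever expanding $X^2H^2$ or $XH^2X$ at all and lands directly on the compact normal form $DH+HD+3C^2$. Second, identifying $D=[X,[A,B]]$ as $2(R_3-H^3)$ lets you recycle the already-verified $k=3$ identity rather than recomputing, and it exposes a structural link between the cubic and quartic corrections that the paper's appendix does not make visible. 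The only thing the paper's version buys is that it is entirely self-contained (no appeal to Lemma~\ref{lem:D3}), but since Lemma~\ref{lem:D3} precedes Lemma~\ref{lem:D4} anyway, that is no real cost. Your observation that $D$ governs both orders is worth recording explicitly.
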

	\begin{proof}
		Write $A=(H+X)/2$ and $B=(H-X)/2$.
		Both sides of \eqref{eq:D4-} are noncommutative polynomials in $H$ and $X$ with scalar coefficients.
		Thus it suffices to expand both sides in the free algebra generated by $H$ and $X$ and compare coefficients of all words of length four.
		A straightforward (but routine) expansion verifies \eqref{eq:D4-}, see Appendix \ref{app:D4}.
	\end{proof}
	
	\begin{lem}\label{lem:skew-square}
		If $K\in\bm$ satisfies $K^*=-K$, then $-K^2=K^*K\succeq 0$.
	\end{lem}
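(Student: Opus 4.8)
The plan is to establish the two assertions of the statement in sequence and then combine them. First I would record the algebraic identity: starting from the hypothesis $K^*=-K$ and substituting it into the product $K^*K$, one gets $K^*K=(-K)K=-K^2$. No computation beyond this single substitution is needed, so the identity $-K^2=K^*K$ is immediate.

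Second, I would invoke the standard fact that $M^*M\succeq 0$ for every $M\in\bm$. The cleanest justification is the inner-product argument: for any $v\in\mathbb{C}^n$ we have $\langle v,M^*Mv\rangle=\langle Mv,Mv\rangle=\|Mv\|^2\ge 0$, so $M^*M\succeq 0$. Applying this with $M=K$ gives $K^*K\succeq 0$, and together with the identity from the first step this yields $-K^2=K^*K\succeq 0$, as claimed.

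I do not anticipate any real obstacle here; the only point requiring care is the sign convention for skew-Hermitian matrices, which is pinned down once and for all by the hypothesis $K^*=-K$. As an alternative route one could observe that $K$ is normal (since $KK^*=K^*K$), diagonalize it by the spectral theorem with purely imaginary eigenvalues $i\mu_j$, $\mu_j\in\mathbb{R}$, and conclude that $-K^2$ has nonnegative eigenvalues $\mu_j^2$; but the inner-product argument above is shorter and avoids the spectral theorem, so that is the version I would write up.
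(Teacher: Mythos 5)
Your proof is correct and follows essentially the same route as the paper: the one-line substitution $K^*K=(-K)K=-K^2$ plus the standard fact that $K^*K\succeq 0$. The paper simply cites the positive semidefiniteness of $K^*K$ as known, whereas you spell out the inner-product justification; this is a matter of presentation, not substance.
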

	
	\begin{proof}
		Since $K^*=-K$, we have $K^*K=(-K)K=-K^2$. As $K^*K$ is positive semidefinite, so is $-K^2$.
	\end{proof}
	
	\begin{thm}\label{thm:k4-eig}
		Let $A,B\in\bh$ and let $H=A+B$, $Q_4$ as above. Then
		\[
		\lambda(H^4)\prec_w \lambda(\Ree\,Q_4).
		\]
		Consequently,
		\[
		\lambda(H^4)\prec_w \sigma(Q_4).
		\]
	\end{thm}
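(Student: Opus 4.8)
The plan is to prove the first assertion $\lambda(H^4)\prec_w\lambda(\Ree\,Q_4)$ by the Ky Fan strategy already used in Theorem~\ref{thm:k3-eig}, modified to accommodate the two-term structure of the order-four identity \eqref{eq:D4-}; the second assertion $\lambda(H^4)\prec_w\sigma(Q_4)$ then follows from Lemma~\ref{lem:fan-hoffman} and transitivity of $\prec_w$, exactly as before.

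First I would isolate the two positivity inputs coming from \eqref{eq:D4-}. Since $X,H\in\bh$, the matrix $K:=[X,H]=2[A,B]$ satisfies $K^*=-K$, so Lemma~\ref{lem:skew-square} gives $-\tfrac14[X,H]^2=\tfrac14K^*K\succeq 0$; hence $\Tr\bigl(E\bigl(-\tfrac14[X,H]^2\bigr)\bigr)\ge 0$ for \emph{every} orthogonal projection $E$. For the double-commutator term I would invoke Lemma~\ref{lem:double-comm-positive} with $F=H^2$: if $E$ is a spectral projection of $H^2$ associated with its $r$ largest eigenvalues, then $\Tr\bigl(E[X,[X,H^2]]\bigr)\ge 0$.

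The one genuinely new point, compared with $k=3$, is the choice of testing projection: $t\mapsto t^4$ is not monotone on $\mathbb{R}$, so a spectral projection of $H$ for its $r$ largest eigenvalues need not be one for $H^4$. Instead I would work relative to $H^2$. Fix $r\in\{1,\dots,n\}$ and choose an orthonormal eigenbasis of $H$ with $H=\diag(h_1,\dots,h_n)$ ordered so that $h_1^2\ge\cdots\ge h_n^2$; because $t\mapsto t^2$ is increasing on $[0,\infty)$, this same basis orders the eigenvalues of $H^2$ and of $H^4=(H^2)^2$ nonincreasingly as well. Let $E=\diag(\underbrace{1,\dots,1}_{r},0,\dots,0)$ in this basis. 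Then $E$ is simultaneously a spectral projection of $H^2$ and of $H^4$ associated with their $r$ largest eigenvalues, so Lemma~\ref{lem:fan_eig} gives
\[
\sum_{j=1}^r\lambda_j(H^4)=\Tr(EH^4),
\qquad
\sum_{j=1}^r\lambda_j(\Ree\,Q_4)\ge\Tr(E\,\Ree\,Q_4).
\]
Thus it suffices to check $\Tr\bigl(E(\Ree\,Q_4-H^4)\bigr)\ge 0$, and by Lemma~\ref{lem:D4} this quantity equals $\tfrac12\Tr\bigl(E[X,[X,H^2]]\bigr)-\tfrac14\Tr\bigl(E[X,H]^2\bigr)$, which is nonnegative by the two inputs above. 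Letting $r$ range over $1,\dots,n$ gives $\lambda(H^4)\prec_w\lambda(\Ree\,Q_4)$, and composing with Lemma~\ref{lem:fan-hoffman} yields $\lambda(H^4)\prec_w\sigma(Q_4)$.

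I do not expect a real obstacle beyond the bookkeeping just described: the substantive content is the algebraic identity \eqref{eq:D4-}, after which the commutator and skew-square positivity lemmas do the rest. One should note, however, that here only weak majorization can hold in general, in contrast to the cubic case: taking the full trace,
\[
\Tr(\Ree\,Q_4)-\Tr(H^4)=-\tfrac14\Tr\bigl([X,H]^2\bigr)=\tfrac14\Tr\bigl([X,H]^*[X,H]\bigr)\ge 0,
\]
which is strictly positive unless $[A,B]=0$, so the identity $\Tr(H^k)=\Tr(\Ree\,Q_k)$ that upgraded $\prec_w$ to $\prec$ at $k=3$ has no analogue at $k=4$.
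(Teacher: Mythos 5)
Your proof is correct and follows essentially the same route as the paper: the same choice of a Ky Fan maximizing projection adapted to $H^2$, the same two positivity inputs (Lemma~\ref{lem:double-comm-positive} with $F=H^2$ and Lemma~\ref{lem:skew-square} for $-[X,H]^2$), applied to the decomposition from Lemma~\ref{lem:D4}. Your closing observation that the trace identity $\Tr(H^k)=\Tr(R_k)$ fails at $k=4$ — so that the majorization cannot be upgraded from $\prec_w$ to $\prec$ as it was at $k=3$ — is a correct and worthwhile remark, though not part of the stated theorem.
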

	
	\begin{proof}
		Fix $r\in\{1,\dots,n\}$.
		Since $H^2\succeq 0$ and $H^4=(H^2)^2$, the map $t\mapsto t^2$ is increasing on $[0,\infty)$.
		Hence we may choose $E$ to be a spectral projection of $H^2$ associated with its $r$ largest eigenvalues; the same $E$ is then also a spectral projection of $H^4$ associated with its $r$ largest eigenvalues.
		By Lemma~\ref{lem:fan_eig},
		\[
		\sum_{j=1}^r\lambda_j(H^4)=\Tr(EH^4),
		\qquad
		\sum_{j=1}^r\lambda_j(\Ree\,Q_4)\ge \Tr(E\,\Ree\,Q_4).
		\]
		Hence it suffices to prove $\Tr(E(\Ree\,Q_4-H^4))\ge 0$.
		
		Using Lemma~\ref{lem:D4},
		\[
		\Tr\bigl(E(\Ree\,Q_4-H^4)\bigr)
		=
		\frac12\Tr\bigl(E[X,[X,H^2]]\bigr)-\frac14\Tr\bigl(E[X,H]^2\bigr).
		\]
		The first term is nonnegative by Lemma~\ref{lem:double-comm-positive} applied with $F=H^2$.
		For the second term, note that $X,H\in\bh$ implies $[X,H]^*=-[X,H]$ (skew-Hermitian), hence by Lemma~\ref{lem:skew-square} we have $-[X,H]^2\succeq 0$. Therefore,
		\[
		-\frac14\Tr\bigl(E[X,H]^2\bigr)
		=
		\frac14\Tr\bigl(E(-[X,H]^2)\bigr)\ge 0,
		\]
 since $E\succeq 0$ and $-[X,H]^2\succeq 0$.

		Thus $\Tr(E(\Ree\,Q_4-H^4))\ge 0$, proving $\lambda(H^4)\prec_w\lambda(\Ree\,Q_4)$.
		Finally, Lemma~\ref{lem:fan-hoffman} implies $\lambda(\Ree\,Q_4)\prec_w\sigma(Q_4)$, hence $\lambda(H^4)\prec_w\sigma(Q_4)$.
	\end{proof}
	
	\begin{cor}\label{cor:k4-derivative}
		For $A,B\in\bh$,
		\[
		\lambda\!\left(\frac{d^4}{dt^4}e^{(A+B)t}\Big|_{t=0}\right)
		\prec_w
		\sigma\!\left(\frac{d^4}{dt^4}(e^{At}e^{Bt})\Big|_{t=0}\right).
		\]
	\end{cor}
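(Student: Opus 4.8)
The plan is to deduce this corollary directly from Theorem~\ref{thm:k4-eig} by identifying the two fourth-order Taylor coefficients at $t=0$ with $H^4$ and $Q_4$. First I would note that $t\mapsto e^{(A+B)t}$ is an entire $\bm$-valued function with $\frac{d^k}{dt^k}e^{(A+B)t}\big|_{t=0}=(A+B)^k=H^k$, so in particular the left-hand side above is $\lambda(H^4)$. Next, since $t\mapsto e^{At}$ and $t\mapsto e^{Bt}$ are entire, the Leibniz rule gives
\[
\frac{d^4}{dt^4}\bigl(e^{At}e^{Bt}\bigr)
=\sum_{p=0}^{4}\binom{4}{p}\Bigl(\tfrac{d^p}{dt^p}e^{At}\Bigr)\Bigl(\tfrac{d^{4-p}}{dt^{4-p}}e^{Bt}\Bigr),
\]
and evaluating at $t=0$ with $\frac{d^p}{dt^p}e^{At}\big|_{t=0}=A^p$ and $\frac{d^{4-p}}{dt^{4-p}}e^{Bt}\big|_{t=0}=B^{4-p}$ produces exactly $Q_4=\sum_{p=0}^{4}\binom{4}{p}A^pB^{4-p}$, as already recorded in the introduction. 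Hence the right-hand side above equals $\sigma(Q_4)$.

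With these identifications the claimed inequality is literally $\lambda(H^4)\prec_w\sigma(Q_4)$, which is the conclusion of Theorem~\ref{thm:k4-eig}; applying that theorem finishes the proof. I do not expect any genuine obstacle here: all of the real work has already been carried out in Lemma~\ref{lem:D4} (the commutator identity $R_4-H^4=\frac12[X,[X,H^2]]-\frac14[X,H]^2$) and in the proof of Theorem~\ref{thm:k4-eig}, which tests this identity against a Ky Fan maximizing projection for $H^4$ and invokes Lemma~\ref{lem:double-comm-positive} and Lemma~\ref{lem:skew-square} to control the two commutator terms. The only point deserving a moment's care is the legitimacy of the term-by-term differentiation and of the Leibniz rule used above, but this is standard for products of entire matrix-valued functions, so the corollary is an immediate restatement of Theorem~\ref{thm:k4-eig} in the language of derivatives at $t=0$.
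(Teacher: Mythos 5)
Your proof is correct and follows exactly the same route as the paper: identify $\frac{d^4}{dt^4}e^{Ht}\big|_{t=0}=H^4$ and $\frac{d^4}{dt^4}(e^{At}e^{Bt})\big|_{t=0}=Q_4$ via the Leibniz rule, then invoke Theorem~\ref{thm:k4-eig}. You simply spell out the differentiation step that the paper treats as already established in the introduction.
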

	
	\begin{proof}
		This is Theorem~\ref{thm:k4-eig} with $H^4=\frac{d^4}{dt^4}e^{Ht}\big|_{t=0}$ and $Q_4=\frac{d^4}{dt^4}(e^{At}e^{Bt})\big|_{t=0}$.
	\end{proof}
	
	\section{Towards general $k$}
	
	Throughout this section we again set
	\[
	H:=A+B,\qquad X:=A-B.
	\]
	For $k\ge 1$ define as before
	\[
	Q_k=\sum_{p=0}^k \binom{k}{p}A^pB^{k-p},
	\qquad
	R_k=\Ree\,Q_k,
	\qquad
	D_k:=R_k-H^k.
	\]
By Lemma~\ref{lem:fan-hoffman}, $\lambda(R_k)\prec_w\sigma(Q_k)$ always holds.
Hence it suffices to prove the Hermitian comparison $\lambda(H^k)\prec_w\lambda(R_k)$.

	\begin{prop}[Ky Fan reduction]\label{prop:reduction}
		Fix $k\ge 1$ and, for each $r=1,\dots,n$, let $E_{k,r}$ be a spectral projection of $H^k$
		associated with its $r$ largest eigenvalues.
		If
		\[
		\Tr(E_{k,r}D_k)\ge 0,\qquad r=1,\dots,n,
		\]
		then $\lambda(H^k)\prec_w\lambda(R_k)$.
	\end{prop}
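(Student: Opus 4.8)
The plan is to derive the weak majorization $\lambda(H^k)\prec_w\lambda(R_k)$ directly from the Ky Fan variational principle (Lemma~\ref{lem:fan_eig}), applied to the two Hermitian matrices $H^k$ and $R_k$ simultaneously, using the single projection $E_{k,r}$ in a deliberately asymmetric way: as the \emph{maximizer} for $H^k$, but only as an \emph{admissible competitor} for $R_k$.

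Concretely, fix $r\in\{1,\dots,n\}$. Because $E_{k,r}$ is an orthogonal projection of rank $r$ onto a span of eigenvectors of $H^k$ for its $r$ largest eigenvalues, the maximum in Lemma~\ref{lem:fan_eig} is attained there, so
\[
\sum_{j=1}^r\lambda_j(H^k)=\Tr\bigl(E_{k,r}H^k\bigr).
\]
Applying the same lemma to $R_k$, now with $E_{k,r}$ merely as one of the feasible rank-$r$ self-adjoint idempotents, gives
\[
\sum_{j=1}^r\lambda_j(R_k)\ge \Tr\bigl(E_{k,r}R_k\bigr).
\]
Subtracting these and using $D_k=R_k-H^k$ together with linearity of the trace,
\[
\sum_{j=1}^r\lambda_j(R_k)-\sum_{j=1}^r\lambda_j(H^k)
\ge \Tr\bigl(E_{k,r}R_k\bigr)-\Tr\bigl(E_{k,r}H^k\bigr)
=\Tr\bigl(E_{k,r}D_k\bigr)\ge 0,
\]
the final inequality being precisely the hypothesis. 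As $r\in\{1,\dots,n\}$ was arbitrary, this establishes $\lambda(H^k)\prec_w\lambda(R_k)$; chaining this with $\lambda(R_k)\prec_w\sigma(Q_k)$ (Lemma~\ref{lem:fan-hoffman}) and transitivity of $\prec_w$ then also recovers $\lambda(H^k)\prec_w\sigma(Q_k)$.

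I do not anticipate any genuine obstacle inside this proposition: it is essentially a bookkeeping step that isolates the quantity $\Tr(E_{k,r}D_k)$ as the sole thing left to control. The only point deserving care is the asymmetric role of $E_{k,r}$ — an equality for $H^k$, an inequality for $R_k$ — which is exactly the mechanism converting an identity into the sought weak-majorization inequality. All the real work is deferred to establishing $\Tr(E_{k,r}D_k)\ge 0$: for $k=3,4$ this was carried out via the explicit commutator identities of Lemmas~\ref{lem:D3} and~\ref{lem:D4}, combined with the positivity statement of Lemma~\ref{lem:double-comm-positive} (and, for $k=4$, Lemma~\ref{lem:skew-square}), and a general sufficient condition along the same lines is what one would seek for larger $k$.
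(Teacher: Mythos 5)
Your proof is correct and is essentially identical to the paper's: both apply Lemma~\ref{lem:fan_eig} with $E_{k,r}$ as the exact maximizer for $\Tr(E H^k)$ and as a mere admissible competitor for $\Tr(E R_k)$, then subtract to isolate $\Tr(E_{k,r}D_k)\ge 0$. The added remark about chaining with Lemma~\ref{lem:fan-hoffman} to get $\lambda(H^k)\prec_w\sigma(Q_k)$ is a harmless extra step beyond the stated proposition.
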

	
	\begin{proof}
		By Lemma~\ref{lem:fan_eig}, for each $r$,
		\[
		\sum_{j=1}^r\lambda_j(H^k)=\max_{\substack{E=E^*=E^2\\ \rank(E)=r}}\Tr(EH^k)=\Tr(E_{k,r}H^k).
		\]
		On the other hand,
		\[
		\sum_{j=1}^r\lambda_j(R_k)=\max_{\substack{E=E^*=E^2\\ \rank(E)=r}}\Tr(ER_k)\ge \Tr(E_{k,r}R_k).
		\]
		Therefore, if $\Tr(E_{k,r}D_k)\ge 0$, then
		\[
		\sum_{j=1}^r\lambda_j(H^k)=\Tr(E_{k,r}H^k)\le \Tr(E_{k,r}R_k)\le \sum_{j=1}^r\lambda_j(R_k)
		\quad (r=1,\dots,n),
		\]
		which is exactly $\lambda(H^k)\prec_w\lambda(R_k)$.
	\end{proof}
	
	\begin{remark}\label{rem:odd-even}
		Let $k\ge 1$ and $r\in\{1,\dots,n\}$.
		Because $H^k$ is a polynomial in $H$, the matrices $H$ and $H^k$ commute and are simultaneously diagonalizable.
		\begin{itemize}
			\item If $k$ is odd, then $t\mapsto t^k$ is increasing on $\mathbb{R}$, so the order of eigenvalues is preserved under $t\mapsto t^k$. Hence any spectral projection of $H$ associated with its $r$ largest eigenvalues is also a spectral projection of $H^k$ associated with its $r$ largest eigenvalues.
			\item If $k$ is even, then $H^k=(H^2)^{k/2}$ with $H^2\succeq 0$, and $t\mapsto t^{k/2}$ is increasing on $[0,\infty)$. Hence any spectral projection of $H^2$ associated with its $r$ largest eigenvalues is also a spectral projection of $H^k$ associated with its $r$ largest eigenvalues.
		\end{itemize}
		In particular, for each $r$ we may (and will) choose $E_{k,r}$ so that it is a spectral projection of $H$ when $k$ is odd, and a spectral projection of $H^2$ when $k$ is even.
	\end{remark}
	
	\begin{thm}[A sufficient condition via commutator decompositions]\label{thm:sufficient-decomp}
		Fix $k\ge 1$ and define $D_k=R_k-H^k$ as above.
		Suppose that there exist integers $m,\ell\ge 0$, scalars $c_1,\dots,c_m\ge 0$, Hermitian matrices $F_1(H),\dots,F_m(H)\in\bh$, and matrices $W_1,\dots,W_\ell\in\bm$ such that
		\begin{equation}\label{eq:sufficient-decomp}
			D_k
			=
			\sum_{j=1}^m c_j\,[X,[X,F_j(H)]] \;+\; \sum_{q=1}^\ell W_q^*W_q.
		\end{equation}
		Assume moreover that each $F_j(H)$ is an increasing function of the relevant operator in Remark~\ref{rem:odd-even}, namely:
		\begin{itemize}
			\item if $k$ is odd, $F_j(H)=f_j(H)$ for some increasing scalar function $f_j:\mathbb{R}\to\mathbb{R}$;
			\item if $k$ is even, $F_j(H)=f_j(H^2)$ for some increasing scalar function $f_j:[0,\infty)\to\mathbb{R}$.
		\end{itemize}
		Then $\lambda(H^k)\prec_w\lambda(R_k)$ and hence $\lambda(H^k)\prec_w\sigma(Q_k)$.
	\end{thm}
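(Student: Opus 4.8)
The plan is to deduce the theorem directly from Proposition~\ref{prop:reduction}, Lemma~\ref{lem:double-comm-positive} and Remark~\ref{rem:odd-even}, so that the entire argument collapses to testing the decomposition \eqref{eq:sufficient-decomp} against one well-chosen Ky Fan projection. Fix $r\in\{1,\dots,n\}$. By Remark~\ref{rem:odd-even} I would pick a rank-$r$ projection $E=E_{k,r}$ that is simultaneously a spectral projection of $H$ (when $k$ is odd) or of $H^2$ (when $k$ is even) associated with its $r$ largest eigenvalues, and hence also a spectral projection of $H^k$ associated with its $r$ largest eigenvalues. Because each $F_j(H)$ is, by hypothesis, an \emph{increasing} scalar function of exactly the operator ($H$ if $k$ is odd, $H^2$ if $k$ is even) of which $E$ is a top-$r$ spectral projection, the very same $E$ is also a spectral projection of $F_j(H)$ associated with its $r$ largest eigenvalues, for every $j=1,\dots,m$. (All matrices in sight are polynomials in $H$, so they are simultaneously diagonalizable; the only point to check is the ordering of eigenvalues, and this is preserved by the monotonicity of each $f_j$.)

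Next I would apply the linear functional $\Tr(E\,\cdot\,)$ to \eqref{eq:sufficient-decomp}, obtaining
\[
\Tr(E D_k)=\sum_{j=1}^m c_j\,\Tr\bigl(E[X,[X,F_j(H)]]\bigr)+\sum_{q=1}^\ell \Tr\bigl(E W_q^* W_q\bigr).
\]
For each $j$, Lemma~\ref{lem:double-comm-positive} applied with $F=F_j(H)$ gives $\Tr\bigl(E[X,[X,F_j(H)]]\bigr)\ge 0$, and $c_j\ge 0$, so every summand in the first sum is nonnegative. For each $q$, $\Tr\bigl(E W_q^* W_q\bigr)=\Tr\bigl(W_q E W_q^*\bigr)\ge 0$ since $E=E^2=E^*\succeq 0$ (equivalently, $W_q E W_q^*=(E W_q^*)^*(E W_q^*)\succeq 0$). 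Hence $\Tr(E_{k,r}D_k)\ge 0$ for every $r=1,\dots,n$, and Proposition~\ref{prop:reduction} yields $\lambda(H^k)\prec_w\lambda(R_k)$. Finally, Lemma~\ref{lem:fan-hoffman} gives $\lambda(R_k)=\lambda(\Ree\,Q_k)\prec_w\sigma(Q_k)$, and transitivity of $\prec_w$ completes the proof.

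I do not expect a genuine obstacle inside this theorem: it is a clean assembly of the Ky Fan reduction, the double-commutator positivity lemma, and the simultaneous-diagonalizability remark, and the only delicate bookkeeping is the observation that one projection $E_{k,r}$ can discharge all its duties at once — being a top-$r$ spectral projection of $H^k$ and of every $F_j(H)$ simultaneously — which is precisely what the monotonicity hypotheses on the $f_j$ secure. The real difficulty lies not in proving the theorem but in verifying its hypothesis for a given $k$: namely, exhibiting an explicit decomposition of the form \eqref{eq:sufficient-decomp} in which the commutator ``seeds'' $F_j(H)$ are honestly monotone functions of $H$ (or $H^2$) and the remainder is a sum of Hermitian squares. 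Lemma~\ref{lem:D3} and Lemma~\ref{lem:D4} show this is achievable at $k=3,4$ (with $F(H)=H$ and $F(H)=H^2$ respectively, plus the term $-\tfrac14[X,H]^2=\tfrac14[X,H]^*[X,H]$ as a single Hermitian square); a uniform construction for all $k$ remains the open point.
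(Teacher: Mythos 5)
Your proof is correct and follows essentially the same path as the paper: choose the Ky Fan projection $E_{k,r}$ via Remark~\ref{rem:odd-even}, use monotonicity to make it a top-$r$ spectral projection of each $F_j(H)$ simultaneously, apply Lemma~\ref{lem:double-comm-positive} termwise and positivity of $\Tr(E W_q^*W_q)$, then invoke Proposition~\ref{prop:reduction} and Lemma~\ref{lem:fan-hoffman}. Nothing is missing; the closing remark that the real work is in \emph{verifying} the hypothesis \eqref{eq:sufficient-decomp} is exactly the paper's point as well.
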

	
	\begin{proof}
		Fix $r$ and choose $E_{k,r}$ as in Remark~\ref{rem:odd-even}.
		By the monotonicity assumption on each $F_j(H)$ and simultaneous diagonalizability, the same $E_{k,r}$ is also a spectral projection of $F_j(H)$ corresponding to its $r$ largest eigenvalues.
		Therefore Lemma~\ref{lem:double-comm-positive} gives
		\[
		\Tr\bigl(E_{k,r}[X,[X,F_j(H)]]\bigr)\ge 0
		\quad\text{for all }j.
		\]
		Also, $\Tr(E_{k,r}W_q^*W_q)\ge 0$ for all $q$ because $E_{k,r}\succeq 0$ and $W_q^*W_q\succeq 0$.
		
		Taking the trace of \eqref{eq:sufficient-decomp} against $E_{k,r}$ yields
		$\Tr(E_{k,r}D_k)\ge 0$.
		By Proposition~\ref{prop:reduction}, $\lambda(H^k)\prec_w\lambda(R_k)$.
		Finally, Lemma~\ref{lem:fan-hoffman} gives $\lambda(R_k)\prec_w\sigma(Q_k)$, and transitivity yields $\lambda(H^k)\prec_w\sigma(Q_k)$.
	\end{proof}
	
	\begin{remark}\label{rem:k34-fit}
		For $k=3$, Lemma~\ref{lem:D3} gives $D_3=\frac14[X,[X,H]]$, which is of the form \eqref{eq:sufficient-decomp} with $m=1$, $c_1=\frac14$, $F_1(H)=H$, and $\ell=0$.
		For $k=4$, Lemma~\ref{lem:D4} gives $D_4=\frac12[X,[X,H^2]]-\frac14[X,H]^2$, which is of the form \eqref{eq:sufficient-decomp} with $m=1$, $c_1=\frac12$, $F_1(H)=H^2$, and $\ell=1$ by taking $W_1=\frac12[X,H]$ and using $W_1^*W_1=-\frac14[X,H]^2$ (Lemma~\ref{lem:skew-square}).
	\end{remark}

	\begin{conj}\label{conj:general}
		For $A,B\in\bh$ and every $k\ge 5$,
		\[
	\lambda(H^k)\prec_w\lambda(R_k).
		\]
	\end{conj}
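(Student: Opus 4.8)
The plan is to extend the strategy of Theorems~\ref{thm:k3-eig}--\ref{thm:k4-eig}. By Proposition~\ref{prop:reduction} it suffices, for each $r$, to exhibit a rank-$r$ orthogonal projection $E$ with $\Tr(ER_k)\ge\sum_{j=1}^r\lambda_j(H^k)$; the first candidate is the spectral projection $E_{k,r}$ of $H^k$ onto its $r$ largest eigenvalues (a spectral projection of $H$ when $k$ is odd and of $H^2$ when $k$ is even, by Remark~\ref{rem:odd-even}), for which the required inequality reads $\Tr(E_{k,r}D_k)\ge 0$. The first step would be to expand $D_k=R_k-H^k$ explicitly in the free algebra on $H=A+B$ and $X=A-B$; a systematic device is the generating identity
\[
\sum_{k\ge 0}\frac{D_k}{k!}\,t^k=\tfrac12\bigl(e^{At}e^{Bt}+e^{Bt}e^{At}\bigr)-e^{tH},
\]
together with the fact that $\log(e^{At}e^{Bt})$ is a Lie series in $A,B$, hence in $H,X$ (equivalently, one expands word by word as in Appendix~\ref{app:D4}). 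One then tries to bring $D_k$ into the form of Theorem~\ref{thm:sufficient-decomp}, namely $\sum_j c_j[X,[X,F_j(H)]]+\sum_q W_q^*W_q$ with $c_j\ge 0$ and $F_j$ monotone in the relevant operator.

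This, however, meets a genuine obstruction already at $k=5$, which any proof must address. A direct expansion gives
\[
D_5=\tfrac{1}{16}\,[X,[X,[X,[X,H]]]]\;+\;D_5^{(2)},
\]
where $D_5^{(2)}$ collects the part of $X$-degree two; the parts of $X$-degree $0,1,3,5$ vanish (degree $0$ trivially, the odd ones by the symmetry $D_k\big|_{X\mapsto -X}=D_k$, degree $5$ since $(X/2+(-X/2))^5=0$). The quartic term $[X,[X,[X,[X,H]]]]$ is Hermitian, traceless and generically nonzero, so it is neither a sum of squares nor a combination of double commutators $[X,[X,F(H)]]$ — those have $X$-degree exactly two — and Theorem~\ref{thm:sufficient-decomp} does not apply. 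Moreover the $\operatorname{ad}_X^4$ analogue of Lemma~\ref{lem:double-comm-positive} is false: for
\[
H=\diag(2,1,0),\qquad X=\begin{pmatrix}0&1&0\\1&0&2\\0&2&0\end{pmatrix},\qquad E=\diag(1,0,0),
\]
one has $[X,[X,H]]=\diag(2,6,-8)$ and $\Tr\bigl(E[X,[X,[X,[X,H]]]]\bigr)=-8<0$, even though $E$ is the spectral projection of $H$ (and of $H^5$) onto its largest eigenvalue. Replacing $X$ by $\lambda X$ scales the quartic term by $\lambda^4$ and $D_5^{(2)}$ by $\lambda^2$, so for $\lambda$ large one gets $\Tr(E_{5,1}D_5)<0$: the hypothesis of Proposition~\ref{prop:reduction} fails for $k=5$. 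Hence for $k\ge 5$ one cannot test against the spectral projection of $H^k$, and a cleverer test projection (or a different method) is required.

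The heart of the proposed proof is therefore the construction of better test projections. I would first split $D_k=C_k+S_k$, where $C_k$ is a sum of double commutators $[X,[X,F_j(H)]]$ with monotone $F_j$ plus squares — handled exactly as before, giving $\lambda(H^k)\prec_w\lambda(H^k+C_k)$ — and $S_k$ carries the ``obstruction'', the nested commutators of $X$-degree $\ge 4$. Then it would be enough to show that passing from $H^k+C_k$ to $R_k=H^k+C_k+S_k$ does not decrease any partial eigenvalue sum, which holds if $S_k$ is positive semidefinite on the top-$r$ eigenspace of $H^k+C_k$ for every $r$ (then $\Tr(E'S_k)=\Tr(E'S_kE')\ge0$ for $E'$ the relevant spectral projection of $H^k+C_k$, and the chain closes). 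A complementary, perturbative attempt: with $R_k(u)=H^k+uD_k$, track $\tfrac{d}{du}\sum_{j=1}^r\lambda_j(R_k(u))=\Tr(P(u)D_k)$, where $P(u)$ is the relevant spectral projection of $R_k(u)$, and bound the total increment over $u\in[0,1]$ — the point being that as $u$ grows the top eigenspaces of $R_k(u)$ rotate away from those of $H^k$, which is exactly what the naive test ignores.

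The step I expect to be the main obstacle is the control of $S_k$ (respectively, of $\Tr(P(u)D_k)$ for $u>0$). Diagonalizing $H$ and expanding $\Tr(ES_k)$ as a weighted sum over closed walks in the $H$-eigenbasis no longer yields a manifestly nonnegative expression — as the $3\times 3$ example above shows already for the single term $[X,[X,[X,[X,H]]]]$, the sign comes from cancellations among walk contributions weighted by sign-indefinite, degree-$\ge4$ symmetric functions of the eigenvalues of $H$ encountered along the walk. Making this work will likely require a structurally new input — an integral or semigroup representation of $D_k$, or an argument exploiting both $\Tr D_k=0$ and the Golden--Thompson origin of $R_k$ — rather than the termwise positivity that suffices at orders $3$ and $4$. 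Given the failure of the $\operatorname{ad}_X^{2m}$ version of Lemma~\ref{lem:double-comm-positive}, one should also keep open the possibility that the conjecture is false for some $k\ge5$ and pursue a counterexample search driven by the same obstruction in parallel.
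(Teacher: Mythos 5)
This statement is a \emph{conjecture} in the paper, not a theorem: the paper offers no proof of it, and Remark~\ref{rem:D5} explicitly records that the authors ``do not currently know how to organize'' $D_5$ into a decomposition of the form~\eqref{eq:sufficient-decomp}. Your submission, correctly, does not claim to prove the conjecture either; what you give is an obstruction analysis, and it is accurate.

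Your structural claim about $D_5$ matches the paper's Remark~\ref{rem:D5}: the $X$-degree-$4$ part is $\tfrac1{16}\operatorname{ad}_X^4(H)$ and the remainder has $X$-degree two, while degrees $0,1,3,5$ vanish (degree $0$ since $A=B$ gives $Q_k=H^k$; odd degrees by the $A\leftrightarrow B$ symmetry $X\mapsto -X$, $R_k\mapsto R_k$; degree $5$ by setting $H=0$, i.e.\ $B=-A$, where $Q_5=0=H^5$). Your $3\times3$ example is also correct: with $H=\diag(2,1,0)$ and the stated $X$ one indeed finds $[X,[X,H]]=\diag(2,6,-8)$ and the $(1,1)$ entry of $\operatorname{ad}_X^4(H)$ equals $-8$, so $\Tr\bigl(E\,\operatorname{ad}_X^4(H)\bigr)=-8<0$ with $E=\diag(1,0,0)$ the top spectral projection of $H$ (hence of $H^5$). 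Scaling $X\mapsto\lambda X$ leaves $H$, $H^5$ and $E_{5,1}$ unchanged while multiplying the quartic part by $\lambda^4$ and the quadratic part by $\lambda^2$, so $\Tr(E_{5,1}D_5)<0$ for $\lambda$ large. This is a concrete, verifiable demonstration that the hypothesis of Proposition~\ref{prop:reduction} fails for $k=5$ with the naive test projection, and it sharpens what the paper only hints at in Remark~\ref{rem:D5}. Do be careful to state the logical scope: your example refutes the sufficient-condition route, not the conjecture itself (the top eigenspace of $R_5$ simply rotates away from that of $H^5$, so the Ky~Fan maximum is attained on a different projection).

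The remainder of your proposal (splitting $D_k=C_k+S_k$, choosing a better test projection, a homotopy $R_k(u)=H^k+uD_k$) is a sensible list of directions but is not carried through, and you rightly leave open the possibility that the conjecture fails for some $k\ge5$. In short: this is not a proof, and the paper does not contain one either; what you have is a correct and somewhat stronger version of the paper's own stated obstruction, making clear that any resolution will need input beyond the termwise-positive commutator decomposition that suffices at orders three and four.
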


	\begin{remark}[An explicit (but more intricate) commutator expansion at $k=5$]\label{rem:D5}
		For completeness we record one convenient identity for $D_5=R_5-H^5$ in terms of $H$ and $X$.
		Let $\{U,V\}=UV+VU$ be the anticommutator and $\operatorname{ad}_X(Y)=[X,Y]$.
		A direct expansion yields
		\[
		\begin{aligned}
			D_5
			&=
			\frac{1}{16}\operatorname{ad}_X^{4}(H)
			+\frac{7}{16}\operatorname{ad}_X^{2}(H^3)
			+\frac{9}{32}\{H,\operatorname{ad}_X^{2}(H^2)\}
			-\frac{1}{32}\{H^2,\operatorname{ad}_X^{2}(H)\}
			+\frac18\,H\,\operatorname{ad}_X^{2}(H)\,H.
		\end{aligned}
		\]
		Unlike $k=3,4$, this formula contains higher iterated commutators and mixed $H$--$\operatorname{ad}_X^2(\cdot)$ terms, and we do not currently know how to organize it into a decomposition of the form \eqref{eq:sufficient-decomp} in full generality.
	\end{remark}
	
	\section*{Acknowledgments}
	Teng Zhang is supported by the China Scholarship Council, the Young Elite Scientists Sponsorship Program for PhD Students (China Association for Science and Technology), and the Fundamental Research Funds for the Central Universities at Xi'an Jiaotong University (Grant No.~xzy022024045).
	He is grateful to his advisor, Minghua Lin, for bringing this topic to his attention.
	
	
	\appendix
	\section{Derivation of the identity \texorpdfstring{\eqref{eq:D4}}{(D4)}}\label{app:D4}
	
	In this appendix we provide a detailed expansion proof of Lemma~\ref{lem:D4}.
	
	\begin{lem}[Lemma~\ref{lem:D4}]\label{lem:D4-app}
		Let $A,B\in\bh$ and define
		\[
		Q_4:=A^4+4A^3B+6A^2B^2+4AB^3+B^4,
		\qquad
		R_4:=\Ree\,Q_4.
		\]
		With $H=A+B$ and $X=A-B$ we have the identity
		\begin{equation}\label{eq:D4}
			R_4-H^4=\frac12[X,[X,H^2]]-\frac14[X,H]^2.
		\end{equation}
	\end{lem}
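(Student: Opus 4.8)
The plan is to convert the asserted operator identity into a purely formal identity in the free associative algebra on two generators and then verify it by a short coefficient comparison, with a parity symmetry cutting down the number of monomials to be checked. First I would pass to the free unital associative algebra $\mathcal{A}$ over $\mathbb{Q}$ on symbols $a,b$, equipped with the word-reversing antiautomorphism $\dagger$ fixing $a$ and $b$, and set $h:=a+b$, $x:=a-b$, $q_4:=\sum_{p=0}^{4}\binom{4}{p}a^{p}b^{4-p}$, and $r_4:=\tfrac12(q_4+q_4^{\dagger})$. The evaluation homomorphism $\varepsilon\colon\mathcal{A}\to\bm$ with $a\mapsto A$, $b\mapsto B$ sends $h\mapsto H$, $x\mapsto X$, $q_4\mapsto Q_4$, and, because $A^*=A$ and $B^*=B$, it sends $w^{\dagger}$ to $\varepsilon(w)^*$ for every word $w$, so $\varepsilon(r_4)=\tfrac12(Q_4+Q_4^*)=R_4$. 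Hence it suffices to establish the formal identity
\[ r_4-h^4=\tfrac12[x,[x,h^2]]-\tfrac14[x,h]^2 \]
in $\mathcal{A}$; applying $\varepsilon$ then yields Lemma~\ref{lem:D4-app}. Substituting $a=\tfrac12(h+x)$ and $b=\tfrac12(h-x)$ turns both sides into homogeneous degree-four elements, i.e.\ $\mathbb{Q}$-linear combinations of the sixteen length-four words in $h$ and $x$, and the problem becomes matching all sixteen coefficients.

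Next I would exploit the involution $x\mapsto -x$, which is the algebra automorphism induced by $a\leftrightarrow b$: it sends $q_4$ to $q_4^{\dagger}$, hence fixes $r_4$, and it obviously fixes $h^4$, $[x,[x,h^2]]$, and $[x,h]^2$. Therefore every word occurring on either side has even $x$-degree, so only $x$-degrees $0,2,4$ can appear. The $x$-degree-zero part of $r_4$ equals $h^4$ (each of $a^{p}b^{4-p}$ and $b^{4-p}a^{p}$ contributes $h^4/16$, and $\tfrac12\sum_p\binom4p\cdot\tfrac{2}{16}=1$), which cancels the $-h^4$, while the right-hand side has no $x$-free word; the $x$-degree-four part of $r_4$ has coefficient $\tfrac1{16}\sum_p\binom4p(-1)^{4-p}=\tfrac1{16}(1-1)^4=0$, and the right-hand side has no pure $x^4$ term. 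So the identity reduces to matching the coefficients of the $\binom42=6$ words of $x$-degree two: $hhxx$, $hxhx$, $hxxh$, $xhhx$, $xhxh$, $xxhh$.

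Finally I would compare those six coefficients directly. For the right-hand side this is immediate from $[x,[x,h^2]]=x^2h^2-2xh^2x+h^2x^2$ and $[x,h]^2=xhxh-xh^2x-hx^2h+hxhx$, which give coefficients $\tfrac12$ on $xxhh$ and $hhxx$, $-\tfrac34$ on $xhhx$, $\tfrac14$ on $hxxh$, and $-\tfrac14$ on $hxhx$ and $xhxh$. For the left-hand side, in $a^{p}b^{4-p}$ the first $p$ factors equal $\tfrac12(h+x)$ and the last $4-p$ equal $\tfrac12(h-x)$, so the coefficient of a word with $x$'s in positions $\{\alpha,\beta\}$ is $\tfrac1{16}(-1)^{\#(\{\alpha,\beta\}\cap\{p+1,\dots,4\})}$, while for $b^{4-p}a^{p}$ it is $\tfrac1{16}(-1)^{\#(\{\alpha,\beta\}\cap\{1,\dots,4-p\})}$; forming $\tfrac12\binom4p$ times the sum of the two and summing over $p=0,\dots,4$ is a one-line arithmetic check that reproduces exactly the six numbers above (for instance, for $hhxx$ the two signs add to $2,0,2,0,2$ as $p$ runs from $0$ to $4$, so the coefficient is $\tfrac1{32}(1\cdot2+6\cdot2+1\cdot2)=\tfrac1{32}\cdot16=\tfrac12$). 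That completes the verification. I do not expect any genuine obstacle: the only delicate point anywhere is the sign bookkeeping in the last step, and the parity reduction is exactly what keeps it to six monomials rather than sixteen and disposes of the bulky $h^4$ and $x^4$ parts in advance. As a consistency check, the six coefficients must sum to $0$, since $D_4$ vanishes whenever $A$ and $B$ commute---then $Q_4=(A+B)^4=H^4$, while $[X,H]=2[A,B]=0$ annihilates the right-hand side---and one may equally use $[x,h]=2[a,b]$ to rewrite $-\tfrac14[x,h]^2$ as $-[A,B]^2$.
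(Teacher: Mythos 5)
Your proof is correct, and the arithmetic in your worked coefficients checks out. However, it takes a genuinely different computational route from the appendix proof of Lemma~\ref{lem:D4-app}: the paper expands both sides entirely in the alphabet $\{A,B\}$ (sixteen length-four words), computing $R_4-H^4$ and then expanding $\tfrac12[X,[X,H^2]]-\tfrac14[X,H]^2$ word by word in $A,B$ and matching. You instead substitute $a=\tfrac12(h+x)$, $b=\tfrac12(h-x)$ and work in the $\{h,x\}$ alphabet, which is closer to the one-line sketch given in the main-body proof of Lemma~\ref{lem:D4}, but you add an idea the paper does not use: the automorphism $a\leftrightarrow b$, equivalently $x\mapsto -x$, fixes both sides, so all odd $x$-degrees drop out, and the $x$-degree-$0$ and $x$-degree-$4$ parts vanish for elementary binomial reasons; this cuts the comparison from sixteen monomial coefficients to the six of $x$-degree two, and also disposes of the bulky $h^4$ cancellation without any calculation. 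Your identification of the coefficient of each $x$-degree-two word in $r_4$ via the sign $(-1)^{\#(\text{x-positions}\cap\text{minus-block})}$, summed against $\tfrac12\binom4p$, is correct (I verified all six values $\tfrac12,\tfrac12,-\tfrac34,\tfrac14,-\tfrac14,-\tfrac14$ and they match the right-hand side expansion). The trade-off is that the paper's appendix version is mechanically elementary and checkable term by term with no symmetry bookkeeping, whereas yours is shorter and better reveals why so much cancellation happens; either serves as a valid proof of the identity.
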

	
	\begin{proof}
		Throughout we use $H=A+B$ and $X=A-B$.
		Since $A,B$ are Hermitian, we have
		\[
		Q_4^*=A^4+4BA^3+6B^2A^2+4B^3A+B^4,
		\]
		hence
		\begin{equation}\label{eq:R4-expanded}
			R_4=\Ree\,Q_4=\frac12(Q_4+Q_4^*)
			=A^4+B^4+2A^3B+3A^2B^2+2AB^3+2BA^3+3B^2A^2+2B^3A .
		\end{equation}
		
		\medskip\noindent
		\emph{Step 1: Expansion of $R_4-H^4$.}
		Write
		\[
		H^4=(A+B)^4=\sum_{w\in\{A,B\}^4} w ,
		\]
		i.e., the sum of all $16$ words of length $4$ in the alphabet $\{A,B\}$.
		Comparing with \eqref{eq:R4-expanded}, we see that $R_4$ contains only the eight block words
		\[
		A^4,\ B^4,\ A^3B,\ BA^3,\ A^2B^2,\ B^2A^2,\ AB^3,\ B^3A,
		\]
		with coefficients $1,1,2,2,3,3,2,2$, respectively, and all other words have coefficient $0$.
		Therefore,
		\begin{align}
			R_4-H^4
			&=\bigl(A^3B+BA^3+2A^2B^2+2B^2A^2+AB^3+B^3A\bigr)\label{eq:LHS}\\
			&\quad-\bigl(A^2BA+ABA^2+ABAB+AB^2A+BA^2B+BABA+BAB^2+B^2AB\bigr). \nonumber
		\end{align}
		
		\medskip\noindent
		\emph{Step 2: Expansion of the commutator side.}
		First compute
		\[
		[X,H]=(A-B)(A+B)-(A+B)(A-B)=2(AB-BA),
		\]
		hence
		\begin{equation}\label{eq:comm-sq}
			-\frac14[X,H]^2=-(AB-BA)^2
			= -ABAB+AB^2A+BA^2B-BABA .
		\end{equation}
		
		Next note that
		\[
		[X,[X,H^2]]=X(XH^2-H^2X)-(XH^2-H^2X)X
		=X^2H^2+H^2X^2-2XH^2X,
		\]
		so
		\begin{equation}\label{eq:double-comm-form}
			\frac12[X,[X,H^2]]
			=\frac12(X^2H^2+H^2X^2)-XH^2X.
		\end{equation}
		We now expand each term.
		
		Since
		\[
		H^2=(A+B)^2=A^2+AB+BA+B^2,\qquad
		X^2=(A-B)^2=A^2-AB-BA+B^2,
		\]
		a direct multiplication gives
		\begin{align*}
			X^2H^2
			&=(A^2-AB-BA+B^2)(A^2+AB+BA+B^2)\\
			&=A^4+A^3B+A^2BA+A^2B^2
			-ABA^2-ABAB-AB^2A-AB^3\\
			&\quad-BA^3-BA^2B-BABA-BAB^2
			+B^2A^2+B^2AB+B^3A+B^4 .
		\end{align*}
		Taking adjoints yields $H^2X^2=(X^2H^2)^*$, and hence
		\begin{align}
			\frac12(X^2H^2+H^2X^2)
			&=A^4+B^4+A^2B^2+B^2A^2 \label{eq:avg}\\
			&\quad-\bigl(ABAB+AB^2A+BA^2B+BABA\bigr).\nonumber
		\end{align}
		
		Moreover,
		\begin{align}
			XH^2X
			&=(A-B)(A^2+AB+BA+B^2)(A-B)\nonumber\\
			&=A^4-A^3B+A^2BA-A^2B^2+ABA^2-ABAB+AB^2A-AB^3 \label{eq:XH2X}\\
			&\quad-BA^3+BA^2B-BABA+BAB^2-B^2A^2+B^2AB-B^3A+B^4.\nonumber
		\end{align}
		
		Subtracting \eqref{eq:XH2X} from \eqref{eq:avg} and using \eqref{eq:double-comm-form}, we obtain
		\begin{align}
			\frac12[X,[X,H^2]]
			&=\bigl(A^3B+BA^3+2A^2B^2+2B^2A^2+AB^3+B^3A\bigr)\label{eq:double-comm-expanded}\\
			&\quad-\bigl(A^2BA+ABA^2+2AB^2A+2BA^2B+BAB^2+B^2AB\bigr).\nonumber
		\end{align}
		
		Finally, combining \eqref{eq:double-comm-expanded} with \eqref{eq:comm-sq} gives
		\begin{align*}
			\frac12[X,[X,H^2]]-\frac14[X,H]^2
			&=\bigl(A^3B+BA^3+2A^2B^2+2B^2A^2+AB^3+B^3A\bigr)\\
			&\quad-\bigl(A^2BA+ABA^2+ABAB+AB^2A+BA^2B+BABA+BAB^2+B^2AB\bigr),
		\end{align*}
		which coincides exactly with \eqref{eq:LHS}. This proves \eqref{eq:D4}.
	\end{proof}
	
\end{document}